\algnewcommand{\Inputs}[1]{%
  \State \textbf{Inputs:}
  \Statex \hspace*{\algorithmicindent}\parbox[t]{.8\linewidth}{\raggedright #1}
}
\algnewcommand{\Initialize}[1]{%
  \State \textbf{Initialize:}
  \Statex \hspace*{\algorithmicindent}\parbox[t]{.8\linewidth}{\raggedright #1}
}
\algnewcommand{\Outputs}[1]{%
  \State \textbf{Outputs:}
  \Statex \hspace*{\algorithmicindent}\parbox[t]{.8\linewidth}{\raggedright #1}
}
\tikzset{
    %Define standard arrow tip
    >=stealth',
    %Define style for boxes
    block/.style={
           rectangle,
           rounded corners,
           draw=black, very thick,
           text width=10em,
           minimum height=5em,
           text centered},
    % Define arrow style
    link/.style={
           ->,
           thick,
           shorten <=2pt,
           shorten >=2pt,}
}
\theoremstyle{plain}
\newtheorem{thm}{Theorem}
\newtheorem{lem}{Lemma}
\theoremstyle{definition}
\newtheorem{defn}{Definition}
\newtheorem{assump}{Assumption}
\theoremstyle{remark}
\newcommand{\iden}{\mathbf I}
\newcommand{\A}{\mathcal A}
\newcommand{\node}{\mathcal N}
\newcommand{\E}{\mathcal E}
\newcommand{\G}{\mathcal G}
\newcommand{\N}{\mathbb N}
\renewcommand{\S}{\mathcal S}
\newcommand{\R}{\mathbb R}
\renewcommand{\P}{\mathcal P}
\DeclareMathOperator{\st}{s.t.}
\DeclareMathOperator{\tr}{\bf trace}
\DeclareMathOperator{\Real}{\bf Re}
\DeclareMathOperator{\diag}{\bf diag}
\DeclareMathOperator*{\argmax}{arg\,max}
\DeclareMathOperator*{\argmin}{arg\,min}
\DeclareMathOperator*{\interior}{int}
\newcommand{\pushright}[1]{\ifmeasuring@#1\else\omit\hfill$\displaystyle#1$\fi\ignorespaces}
\newcommand{\pushleft}[1]{\ifmeasuring@#1\else\omit$\displaystyle#1$\hfill\fi\ignorespaces}
\renewcommand*\env@matrix[1][*\c@MaxMatrixCols c]{%
  \hskip -\arraycolsep
  \let\@ifnextchar\new@ifnextchar
  \array{#1}}
\title{\LARGE \bf On Resilience Analysis and Quantification for Wide-Area Control of Power Systems}
\author{Yueyun Lu, Chin-Yao Chang, Wei Zhang, Laurentiu D. Marinovici and Antonio J. Conejo
\thanks{Y. Lu, C.-Y. Chang are with the Department of Electrical and Computer Engineering, The Ohio State University, Columbus, OH 43210, USA}
\thanks{W. Zhang is with the Department of Electrical and Computer Engineering, The Ohio State University, Columbus, OH 43210, USA, with a joint appointment in the Electricity Infrastructure Group, Pacific Northwest National Laboratory, Richland, WA 99354, USA}
\thanks{L. D. Marinovici is with the Electricity Infrastructure Group, Pacific Northwest National Laboratory, Richland, WA 99354, USA}
\thanks{A. J. Conejo is with the Departments of Electrical and Computer Engineering and the Department of Integrated Systems Engineering, The Ohio State University, Columbus, OH 43210, USA}
\thanks{This work was funded by Laboratory Directed Research and Development funding under the Control of Complex Systems Initiative at Pacific Northwest National Laboratory, which is operated for the US Department of Energy by Battelle Memorial Institute under Contract DE-AC05-76RL01830.}}
\begin{document}

\maketitle
\thispagestyle{empty}
\pagestyle{empty}

%%%%%%%%%%%%%%%%%%%%%%%%%%%%%%%%%%%%%%%%%%%%%%%%%%%%%%%%%%%%%%%%%%%%%%%%%%%%%%%%
\begin{abstract}
Wide-area control is an effective mean to reduce inter-area oscillations of large power systems. Its dependence on communication of remote measurement signals makes the closed-loop system vulnerable to cyber attacks. This paper develops a framework to analyze and quantify resilience of a given wide-area controller under disruptive attacks on certain communication links. Resilience of a given controller is measured in terms of closed-loop eigenvalues under the worst possible attack strategy. The computation of such a resilience index is challenging especially for large-scale power systems due to the discrete nature of the attack strategies. To address the challenge, we propose an optimization-based formulation and a convex relaxation approach to facilitate the computation. Conditions under which the relaxation is exact are derived and an efficient algorithm with guaranteed convergence is also developed. The proposed framework and the algorithm allow us to quantify resilience for given wide-area controllers and also provide sufficient conditions to guarantee closed-loop stability under all possible communication attacks. Simulations are performed on the IEEE 39-bus system to illustrate the proposed resilience analysis and computation framework.
\end{abstract}

\section{Introduction}
With the power grid increasingly working close to its operation limit, inter-area oscillation becomes ever more lightly damped, which easily results in instability~\cite{venkatasubramanian2004analysis}. Local decentralized controllers, such as power system stabilizers (PSSs), are designed to suppress local oscillations. They may interact in an adverse way, if not carefully tuned, that aggravates inter-area oscillations. Motivated by the advancement in the Wide-Area Measurement System (WAMS) technology, recent research efforts have been focusing on wide-area control (WAC) problems~\cite{chakrabortty2013introduction,kamwa2001wide,chaudhuri2004wide}. The goal of WAC is to achieve better closed-loop performance, such as inter-area oscillation damping, by the use of remote measurement signals via the Phasor Measurement Units (PMUs) installed across the grid.

One important class of literature on WAC is concerned with optimal control design under certain performance metric. The main control objective is inter-area oscillation damping, for which various metrics have been proposed. In the design of supplementary damping controller (SDC) using Linear Parameter Varying (LPV) model~\cite{liu2006lpv}, the metric is given by the signal amplification from disturbance to output. To design FACTS (Flexible AC
Transmission Systems)-based control facilitated by an aggregate model~\cite{chakrabortty2012wide}, the metric is defined on the closed-loop transient response of inter-area oscillation modes. A mixed $H_2/H_{\infty}$ output feedback control design is studied in~\cite{zhang2008design} where the metric is concerned with geometric measures of modal controllability/observability. Another control objective is voltage stability. For the automatic scheduling and coordination of voltage control devices~\cite{vu1996improved,zobian1996steady,tomsovic2005designing}, the metric is composed of several terms regarding switching cost, penalty on voltage violations and penalty on circular VAR flow. Typically, the controllers are designed for a fixed structure, that is to say, the communication network has a pre-specified structure. There has been a recent interest in incorporating communication structure into the design. Due to the fact that most optimal control formulations result in controllers without any sparsity pattern and require centralized implementation, a sparsity-promoting optimal control scheme is proposed in~\cite{dorfler2014sparsity} where the $\ell_1$ regularization term in the objective accounts for the structural design. 

%Several metrics have been studied for the control objective of inter-area oscillation damping. 

Another body of literature is concerned with delays and failures arising in the communication network of WAMS. To deal with network delays, a predictor-based $\mathcal{H}_{\infty}$ control design strategy is discussed in~\cite{chaudhuri2004wide} to account for a delayed arrival of feedback signals. Furthermore, an arbitration approach is proposed in~\cite{soudbakhsh2015delay} to exploit the flexibility of communication network so that the designed controllers are in sync with network delays, making the closed-loop system delay-aware, rather than just delay-tolerant. To counteract the impact of communication failures on the closed-loop system, a framework proposed in~\cite{zhang2014wide} utilizes a hierarchical set of wide-area measurements for feedback and employs channel switching based on mathematical morphology identification.

%In this paper, we formulate and study resilience problems in the context of cyber attacks. Our goal is to develop a framework for the analysis and quantification of resilience under disruptive attacks on communication network. 
Existing works on WAC resilience mostly focus on communication delays or failures. There has been limited discussion on resilience under adversaries. Due to the increasing threat on cyber security~\cite{mo2012cyber,teixeira2015secure}, remote signal transmission via communication channels is prone to cyber attacks. As WAC relies heavily on the availability of remote signals, the integrity of communication network plays a crucial role in the closed-loop performance. In this paper, we consider the adversary has disruptive resources~\cite{teixeira2015secure} that can result in unavailability of the signals transmitted over communication channels. Such an attack model is commonly referred to as Denial of Service (DoS) attack~\cite{amin2009safe}. To launch a DoS attack, the adversary can jam the communication channels, compromise devices and prevent them from sending data, attack the routing protocols, flood network traffic, among others. Our goal is to develop a framework to analyze and quantify resilience under DoS attacks. In particular, we aim to design effective ways to determine whether a given wide-area controller is resilient, and how resilient it is under certain attack strategy. To achieve this, we use network-reduced linearized power system model under linear feedback control. Such a model is widely used in the literature on WAC problems~\cite{liu2006lpv,tomsovic2005designing,chakrabortty2012wide,zima2005design,zhang2008design}. We first define resilience in terms of closed-loop spectral abscissa (the largest real part of eigenvalues) under the worst possible attack strategy. The direct computation of such a resilience metric is challenging, especially in large-scale network due to its combinatorial nature. We then propose an equivalent optimization-based formulation and a convex relaxation approach to facilitate the computation. On the theoretic side, we derive a condition under which the relaxation is exact. On the practical side, we develop an efficient algorithm for the relaxed problem with guaranteed convergence. The algorithm not only provides resilience criterion but also reveals structural vulnerabilities. These results contribute new perspectives to WAC with an emphasis on resilience under DoS communication attacks. They also allow us to systematically analyze resilience properties of a given wide-area controller.

\section{Problem Formulation}
\label{sec:prob}
%Power system network can be described by a graph with nodes and edges representing buses and transmission lines respectively. The buses are classified into two types, namely generator buses and load buses. The generator buses can be modeled by DAEs (differential algebraic equations) where the differential part includes swing equations and dynamics of local controller, i.e. PSS, AVR, exciter and etc, and the algebraic part describes energy conservation law, i.e. active and reactive power balance. The load buses can be modeled by algebraic equations that contain only algebraic variables. In a nutshell, state variables (dynamics) are only associated with generator buses, which includes generator phase angle, frequency, the quadrature-axis internal emf, state variables of local controller and etc. The buses are interconnected by transmission lines where the interconnection can be captured by Laplacian of the graph whose weight is determined by the impedance of transmission lines.

In this paper, we consider a network-reduced power system model commonly used in the literature~\cite{nabavi2015distributed,soudbakhsh2015delay,liu2006lpv,tomsovic2005designing,chakrabortty2012wide,zima2005design,zhang2008design}. The overall power system is represented by an interconnected dynamical system defined on a graph $\G=(\node,\E)$, where $\node\triangleq\{1,\cdots,N\}$ denotes the set of buses and $\E$ denotes the set of transmissions lines between buses. Let $x_i(t)\in\R^{n_i}$ be the state variables associated with bus $i$. Depending on the level of details used in the generator model, $x_i$ can represent generator phase angle, frequency, quadrature-axis internal emf, state variables of Power System Stabilizer (PSS) or other local controllers. Typically, local dynamics and local controllers can be described by linear systems subject to nonlinear coupling terms due to power exchange with neighboring buses. The overall system can be written is the following form:
\begin{align*}
\dot{x}_i=A_{ii}x_i+c_i+\sum_{(i,j)\in\E,j\neq i}h(x_i,x_j),
\end{align*}
where $A_{ii}\in\R^{n_i\times n_i}$ is the system matrix that has incorporated local controls, $c_i$ is a constant term regarding mechanical power input and $h(x_i,x_j)$ is a nonlinear function representing the power flow between buses $i$ and $j$. By linearization at a stationary operating point, we arrive at a distributed control system of the following form:
\begin{align}
	\dot{x}_i=A_{ii}x_i+\sum_{j\in\node,j\neq i} A_{ij}x_j+B_iu_i,\quad i\in\node, \label{eq:subsysi}
\end{align}
where with slight abuse of notation, $x_i$ represents the deviation of state variables from the nominal operating point, $A_{ij}$ captures the linearized coupling between buses $i$ and $j$ ($A_{ij}=0$ if there is no coupling), and $B_iu_i$ is an introduced wide-area control action that reacts to deviations from the nominal operating point based on both local and remote state information. We consider wide-area control $u_i$ to be composed of local component $u_{i,loc}$ that depends on local state information and wide-area component $u_{i,wac}$ that depends on remote state information in the following form:
\begin{align}
	u_i=u_{i,loc}+u_{i,wac}=K_{ii}x_i+\sum_{j\in\node,j\neq i} K_{ij}x_j, \label{eq:control}
\end{align}
where $K_{ij}\in\R^{m_i\times n_j},i,j\in\node$ are feedback gains. The local component $u_{i,loc}$ is an additional correction on top of local controllers, which can be set to zero if there is no such correction. Note that the sparsity pattern of feedback gains captures the structure of communication network. Define $n\triangleq \sum_{i=1}^N n_i,m\triangleq \sum_{i=1}^N m_i$. Let $x=[x_1^T,\cdots,x_N^T]^T\in\R^n$ and $u=[u_1^T,\cdots,u_N^T]^T\in\R^m$. The overall system can be described by
\begin{align}
	\dot{x}(t)=(A+BK)x(t), \label{eq:cl}
\end{align}
where $A=[A_{ij}]_{1\le i,j\le N}\in\R^{n\times n}, B=\diag\{B_j\}_{1\le j\le N}\in\R^{n\times m}, K=[K_{ij}]_{1\le i,j\le N}\in\R^{m\times n}$ are in block form.

%Denial of service attacks compromise the availability of resources by, for instance, jamming the communication
%relies on SCADA system for the acquisition of remote measurement signals
%As shown by the analysis of a database that tracked cyber-incidents affecting industrial control systems from 1982 to 2003 [4], DoS is the most likely threat to control systems. Examples include Stuxnet attack

Wide-area control is prone to cyber attacks due to its dependence on remote measurement signals that can be compromised by a malicious adversary. In this paper, we consider DoS attacks~\cite{amin2009safe} that can result in unavailability of the signals transmitted over the attacked channels. We describe an {\em attack strategy} by $\alpha\in\{0,1\}^{N\times N}$ where entry $\alpha_{ij}=1$ means the channel from subsystem $j$ to $i$ is intact whereas $\alpha_{ij}=0$ means it is under attack. By assumption, $\alpha_{ii}=1,\forall i\in\node$. The set of all possible attack strategies is called {\em (pure) attack space} and is defined as $\A_0\triangleq\{\alpha\in\{0,1\}^{N\times N}:\alpha_{ii}=1,i\in\node\}$. The consequence of DoS attack is modeled by infinite delay of feedback signals. 

We assume that an attack strategy $\alpha$ impacts the wide-area control in the following way:
\begin{align*}
u_i=K_{ii}x_i+\sum_{j\in\node,j\neq i} \alpha_{ij}K_{ij}x_j.
\end{align*}
This corresponds to the case where the controller will ignore the component $K_{ij}x_j$ if the measurement signal of $x_j$ does not arrive within a certain time period. Such a reaction scheme is natural and commonly used in the literature~\cite{soudbakhsh2015delay}.
%Now assume that the communication channel from subsystem $j$ to $i$ is attacked at time $t_a$, the post-attack closed-loop system can be described by
%\begin{align}
%&\dot{x}(t)=(A+BK^{-(i,j)})x(t)+K^{(i,j)}x_j(t_a), \text{ where} \label{eq:clatkij} \\
%&K^{(i,j)}=\mathbf{1}_{m[i]} K \mathbf{1}_{n[j]}, K^{-(i,j)}=K- K^{(i,j)}, \nonumber \\
%&\mathbf{1}_{m[i]}=\diag\{\zero_{m_1},\cdots,\zero_{m_{i-1}},\iden_{m_i},\zero_{m_{i+1}},\cdots,\zero_{m_N}\}, \nonumber \\
%&\mathbf{1}_{n[j]}=\diag\{\zero_{n_1},\cdots,\zero_{n_{j-1}},\iden_{n_j},\zero_{n_{j+1}},\cdots,\zero_{n_N}\}. \nonumber
%\end{align}
%Observe that $K^{(i,j)}$ is obtained by only keeping the $(i,j)$-th block of $K$ and nulling all the other entries, whereas $K^{-(i,j)}$ is obtained by nulling the $(i,j)$-th block of $K$ and keeping all the other entries. Notice that the dynamics of system~(\ref{eq:clatkij}) is affine in state rather than linear and the constant depends on the states at the attack time. If no action is taken after the occurrence of attack, the system will be driven away from the pre-attack equilibrium. To make the problem within the scope of small signal stability, we assume that the system operator can immediately disable the part of wide-area control whose feedback signal is lost due to an attack on the communication channel. The assumption can be justified as a self-protective mechanism taken by the system operator. Now the constant term on the RHS of~(\ref{eq:clatkij}) becomes a delta function. 
Now we can write the post-attack closed-loop system under attack strategy $\alpha\in\A_0$ as
\begin{align}
	\dot{x}=(A+BK\circ \alpha)x, \label{eq:clatkalpha} 
\end{align}
where $K\circ \alpha\triangleq [K_{ij}\alpha_{ij}]_{1\le i,j\le N}$ denotes the elementwise multiplication between entries of $\alpha$ (scalar) and subblocks of $K$ (matrix). Define $A(\alpha)\triangleq A+B K\circ\alpha$. To write the elementwise multiplication $\circ$ as a matrix multiplication, we consider the following transformation:
\begin{align*}
&\tilde{K}=\diag\{\tilde{K}_{[j]}\}_{1\le j\le N}\in\R^{n\times nN},\text{ where}\\
&\tilde{K}_{[j]}=\begin{bmatrix}[c|c|c|c]K_{j1} & K_{j2} & \cdots & K_{jN}\end{bmatrix}_{n_j\times n},\\
&\tilde{\alpha}=\begin{bmatrix}[c|c|c|c]\tilde{\alpha}_{[1]} & \tilde{\alpha}_{[2]} & \cdots & \tilde{\alpha}_{[N]}\end{bmatrix}^T\in\R^{nN\times n}, \text{ where} \\
&\tilde{\alpha}_{[k]}=\diag\{\alpha_{kj}\iden_{n_j}\}_{1\le j\le N}\in\R^{n\times n}.
\end{align*}
Then, $K\circ\alpha=\tilde{K}\tilde{\alpha}$. Furthermore, $\tilde{\alpha}$ can be written as the linear combination of a collection of constant matrices $\{M_{ij}\in\R^{Nn\times n}:1\le i,j\le N\}$ with entries of $\alpha$ as linear coefficients, i.e.,
%Now we can further write $\tilde{\alpha}$ affine in entries of $\alpha$ as follows.
\begin{align*}
&\tilde{\alpha}=\sum_{1\le i,j\le N} \alpha_{ij}M_{ij}, \text{ where} \\
&M_{ij}(p,q)=\left\{ \begin{array}{rr}
1, & \text{if } p-q=(i-1)n+\sum_{k=1}^{j-1}n_k,\\ & \text{and } q\in\{1,2,\cdots,n_j\} \\ 0, & \text{otherwise}
\end{array} \right..
\end{align*}
Now, the closed-loop system matrix $A(\alpha)$ can be written in the following form that is affine in entries of $\alpha$.
\begin{align}
&A(\alpha)\triangleq A+BK\circ \alpha=A+\sum_{1\le i,j\le N} B\tilde{K}M_{ij}\alpha_{ij}. \label{eq:Aalpha}
\end{align}

We consider a wide-area controller to be resilient if it can survive all possible (pure) attack strategies on the communication channel. 
\begin{defn}
A controller $K$ is called {\em resilient} if system~(\ref{eq:clatkalpha}) is stable for all $\alpha\in\A_0$. Conversely, it is called {\em not resilient} if there exists an $\alpha\in\A_0$ under which system~(\ref{eq:clatkalpha}) is unstable.
\label{def:resilient}
\end{defn}
In what follows, we will analyze and quantify the resilience notion given in Definition~\ref{def:resilient}. The first problem to address is under what condition the resilience of a given controller is guaranteed. We aim to derive conditions in terms of optimization problems whose structure can facilitate the analysis. A further problem is concerned with the degree of resilience. We want to define a resilience index as a normalized factor to quantify how resilient a given controller is to certain attack strategies. For the practical aspect, the goal is to develop an efficient algorithm to check the proposed resilience conditions as well as identify structural vulnerabilities.

\section{A Motivating Example}
\label{sec:ex}
WAC makes use of state information from remote buses to improve the closed-loop performance under local decentralized controllers. One may naturally think that a loss of part of remote measurement signals will only gracefully degrade closed-loop performance without causing instabilities. However, such an intuition is unfortunately not true in general. In fact, a wide-area controller can become destabilizing under a loss of a small subset of communication links. We now use a simple hypothetical example to illustrate this fact.

%This implies that closed-loop performance changes in a ``non-monotone'' fashion in response to the perturbation on the communication structure. We now illustrate this counter-intuitive property of WAC by a simple numerical example. 

Consider a networked system in the form~(\ref{eq:cl}) with $N=3$ subsystems and each of which has two states and two control inputs. For simplicity, we assume there is no physical coupling among the three subsystems. Assume that $A_{11}=A_{22}=\frac{1}{2}E_2, A_{33}=E_1$, $B_1=B_2=B_2=\iden_2$, $2K_{11}=-K_{13}=K_{21}=-\frac{1}{2}K_{23}=-K_{31}=K_{33}=E_1$, $K_{12}=2K_{22}=K_{32}=E_2$, where
%$K_{11}=\frac{1}{2}E_1, K_{12}=E_2, K_{13}=-E_1$, $K_{21}=E_1, K_{22}=\frac{1}{2}E_2, K_{23}=-2E_1$, $K_{31}=-E_1, K_{32}=E_2, K_{33}=E_1$, where
\begin{align*}
E_1=\begin{bmatrix}
	-3&-1\\
	12&2
	\end{bmatrix}\text{ and }
E_2=\begin{bmatrix}
	-3&1\\
	-12&2
	\end{bmatrix}.
\end{align*}
Let $A_c$ and $A_d$ be the closed-loop system matrices under controller $K$ and its full distributed realization, respectively.
\begin{align*}
&A_c\triangleq (A+BK)=\begin{bmatrix} E_1 & E_2 & -E_1\\E_1 & E_2 & -2E_1 \\ -E_1 & E_2 & 2E_1 \end{bmatrix},\\
&A_d\triangleq A+BK\circ \iden_6=\begin{bmatrix} E_1 & 0 & 0\\0 & E_2 & 0 \\ 0 & 0 & 2E_1 \end{bmatrix}.
\end{align*}
It is easy to check that both $A_c$ and $A_d$ are stable. Now consider the attack strategy $\alpha$ that targets at the communication channel from subsystem 3 to 2, i.e. $\alpha_{23}=0$. The post-attack closed-loop system matrix is 
\begin{align*}
A_a\triangleq A+BK\circ\alpha=\begin{bmatrix} E_1 & E_2 & -E_1\\E_1 & E_2 & 0 \\ -E_1 & E_2 & 2E_1 \end{bmatrix}.
\end{align*}
As $A_a$ has eigenvalues $5.1596,0.6968,-0.8631,-1.3561 \pm 6.5185i,-6.2811$, two of which are on the right half of the plane, the system is no longer stable. We can see that controller $K$ is vulnerable under the attack on the communication channel $3\to 2$.
%The closed-loop system matrix under controller $K$
%\begin{align*}
%A_c\triangleq (A+BK)=\begin{bmatrix} E_1 & E_2 & -E_1\\E_1 & E_2 & -2E_1 \\ -E_1 & E_2 & 2E_1 \end{bmatrix}.
%\end{align*}
%Since $A_c$ has eigenvalues $-0.1523\pm 1.0116i$, $-0.5027\pm 3.3302i$, $-1.3450\pm 8.4244i$ all on the left half of the plane, the overall system is stable under the nominal condition. Now consider the decentralized realization of controller $K$ under which the closed-loop system matrix is
%\begin{align*}
%A_d\triangleq A+BK\circ \iden_6=\begin{bmatrix} E_1 & 0 & 0\\0 & E_2 & 0 \\ 0 & 0 & 2E_1 \end{bmatrix}.
%\end{align*}
%Since $A_d$ has eigenvalues $-0.5\pm 2.3979i$, the overall system is stable under decentralized local controllers. 

\section{Resilience Analysis and Quantification}
In this section, we develop a Lyapunov-based framework to analyze and quantify resilience under DoS communication attacks as formulated in Section~\ref{sec:prob}.
%The discussion is two-folds: i) We use optimization-based formulation to derive several resilience conditions. ii) We define an optimization-based resilience metric to characterize the degree of resilience for certain attack strategy.
\subsection{Resilience Conditions}
\label{sec:cond}
A system is stable if and only if all its eigenvalues have negative real part, and conversely it is unstable if and only if at least one of its eigenvalues has positive real part. Given a square matrix, we call the maximum among the real part of its eigenvalues the {\em spectral abscissa}. One direct approach for resilience condition is to first seek for the attack strategy that results in the largest spectral abscissa of closed-loop system matrix and then determine the sign of the largest spectral abscissa. For the case where it is negative, the system remains stable under all attack strategies; while for the case where it is positive, there exists at least one attack strategy that drives the system unstable. The direct formulation of resilience condition takes the following form:
\begin{align*}
    \mathbf{P0}\quad \gamma^*_0\triangleq\max_{\alpha\in\A_0}\quad & \Real(\lambda_{\max} (A(\alpha))) 
\end{align*}

If $\gamma^*_0<0$, then wide-area controller $K$ can survive all possible attacks on the communication channels, otherwise it inherits structural vulnerabilities. The optimization problem $\mathbf{P0}$ exhibits several main challenges: i) It is an unsymmetric eigenvalue problem for which the spectral theorem does not apply and thus $\lambda_{\max}$ does not have an explicit expression. ii) The objective is essentially nonconvex due to the maximization of the largest real part of eigenvalues. Typically, eigenvalue optimization problems are formulated as the minimization of the largest eigenvalue or the maximization of the smallest eigenvalue, both of which are convex. However, this is not the case for $\mathbf{P0}$. iii) The decision variable is binary and not continuous, making the problem combinatorial in nature. To address the above challenges, we next reformulate the problem via Lyapunov stability theory.

\subsubsection{A Lyapunov Formulation}
Recall that the post-attack system~(\ref{eq:clatkalpha}) is stable if and only if it admits a quadratic Lyapunov function $V(x)=x^TPx$ for some $P\succeq 0$. The condition can be written in the form of SDP: There exists a $P_0\succeq 0$ such that
\begin{align}
	A(\alpha)^TP_0+P_0A(\alpha)\prec 0. \label{eq:lyastab}
\end{align}
Conversely, the post-attack system~(\ref{eq:clatkalpha}) is unstable if and only if for all $P\succeq 0$, we can find a unit directional vector $x_P\in\{z:\|z\|=1\}$, where the subscript emphasizes the dependence of the vector on $P$, such that 
\begin{align}
	x_P^T(A(\alpha)^TP+PA(\alpha))x_P\ge 0. \label{eq:lyaunstab}
\end{align}
Inspired by the above Lyapunov characterization, we consider the following formulation:
\begin{align*}
	\mathbf{Lya0}\quad \gamma^*_{L0}\triangleq \max_{\alpha\in\A_0}\min_{P\succeq 0}\quad\lambda_{\max}(A(\alpha)^TP+PA(\alpha))
\end{align*}

%In the case of (\ref{eq:lyastab}), $\lambda_{\max}(A(\alpha)^TP_0+P_0A(\alpha))<0$ for some $P_0\succeq 0$ while in the case of (\ref{eq:lyaunstab}), $\lambda_{\max}(A(\alpha)^TP+PA(\alpha))\triangleq \max_{\|x\|=1}x^T\big(A(\alpha)^TP+PA(\alpha)\big)x\ge x_P^T\big(A(\alpha)^TP+PA(\alpha)\big)x_P\ge 0$ for all $P\succeq 0$. 
%In summary, for a given $\alpha\in\A_0$, the post-attack system~(\ref{eq:clatkalpha}) under $\alpha$ is stable if and only if
%\begin{align*}
%\min_{P\succeq 0}\lambda_{\max}(A(\alpha)^TP+PA(\alpha))=-\infty,
%\end{align*}
%where the unboundedness is due to the homogeneity of $P$. On the other hand, the post-attack system~(\ref{eq:clatkalpha}) under $\alpha$ is unstable if and only if
%\begin{align*}
%\min_{P\succeq 0}\lambda_{\max}(A(\alpha)^TP+PA(\alpha))\ge 0,
%\end{align*}

\begin{thm}[Sufficient and Necessary Condition]
A controller $K$ is resilient {\em if and only if} $\gamma^*_{L0}=-\infty$, and is not resilient {\em if and only if} $\gamma^*_{L0}\ge 0$.
\label{thm:Lya0}
\end{thm}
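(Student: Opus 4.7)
My plan is to decouple the bilevel problem by analyzing the inner minimization $\phi(\alpha)\triangleq\min_{P\succeq 0}\lambda_{\max}(A(\alpha)^TP+PA(\alpha))$ for each fixed $\alpha\in\A_0$, and to establish the dichotomy
\begin{align*}
\phi(\alpha)=\begin{cases}-\infty,&\text{if } A(\alpha)\text{ is Hurwitz,}\\ \ge 0,&\text{otherwise.}\end{cases}
\end{align*}
Once this is shown, the theorem follows immediately: $\gamma^*_{L0}=\max_{\alpha\in\A_0}\phi(\alpha)=-\infty$ exactly when every $\alpha$ lands in the first branch (resilience), and $\gamma^*_{L0}\ge 0$ exactly when some $\alpha$ lands in the second branch (loss of resilience). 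Note that the dichotomy also rules out any finite negative value of $\gamma^*_{L0}$, which is why the two biconditionals in the statement together exhaust all cases.

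For the stable branch, I would invoke the standard Lyapunov theorem cited in (\ref{eq:lyastab}): there exists $P_0\succ 0$ with $A(\alpha)^TP_0+P_0A(\alpha)\prec 0$, so $c\triangleq\lambda_{\max}(A(\alpha)^TP_0+P_0A(\alpha))<0$. Substituting $P=tP_0$ for $t>0$ and exploiting linearity in $P$ gives $\lambda_{\max}(A(\alpha)^T(tP_0)+(tP_0)A(\alpha))=tc\to-\infty$ as $t\to\infty$, hence $\phi(\alpha)=-\infty$.

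For the unstable branch, I would pick an eigenvalue $\lambda$ of $A(\alpha)$ with $\Real(\lambda)\ge 0$ and a (possibly complex) right eigenvector $v\ne 0$. For any $P\succeq 0$, using $A(\alpha)v=\lambda v$ and $v^*A(\alpha)^T=\overline{\lambda}v^*$,
\begin{align*}
v^*\bigl(A(\alpha)^TP+PA(\alpha)\bigr)v=(\overline{\lambda}+\lambda)\,v^*Pv=2\Real(\lambda)\,v^*Pv\ge 0,
\end{align*}
since $v^*Pv\ge 0$. Because $A(\alpha)^TP+PA(\alpha)$ is real symmetric, a Rayleigh-quotient bound over complex vectors yields $\lambda_{\max}(A(\alpha)^TP+PA(\alpha))\ge v^*(\cdot)v/\|v\|^2\ge 0$ for every $P\succeq 0$. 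Taking $P=0$ achieves the value $0$, so $\phi(\alpha)=0$.

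The main obstacle is the unstable branch: one must be careful that the eigenvector $v$ may be complex, so the Rayleigh quotient argument has to be set up over $\mathbb{C}^n$ with $v^*Pv$ real because $P$ is real symmetric. A second minor point is handling the boundary case where $P$ is merely positive semidefinite rather than positive definite, which is why the argument above works with $P\succeq 0$ throughout and why $P=0$ can be used to certify the upper bound $\phi(\alpha)\le 0$ in the unstable case. With these two technicalities handled, combining the two branches produces both biconditionals of the theorem.
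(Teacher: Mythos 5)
Your proof is correct and follows essentially the same route as the paper: a dichotomy on the inner value $\min_{P\succeq 0}\lambda_{\max}(A(\alpha)^TP+PA(\alpha))$, driven to $-\infty$ by scaling a Lyapunov certificate when $A(\alpha)$ is Hurwitz and bounded below by $0$ otherwise, after which both biconditionals follow by taking the max over $\A_0$. The only difference is that where the paper invokes the converse Lyapunov characterization~(\ref{eq:lyaunstab}) for the nonnegativity step, you establish it directly with a complex-eigenvector Rayleigh-quotient argument (and pin the value to exactly $0$ via $P=0$, which also makes the exhaustiveness of the two cases explicit) --- a self-contained justification of the same step rather than a different approach.
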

\begin{proof}
We partition the pure attack space into two disjoint sets, i.e. $\A_0=\A_0^s\sqcup\A_0^u$, where $\A_0^s$ is the set of stabilizing attack strategies and $\A_0^u$ is the set of destabilizing attack strategies. Let $\alpha^s\in\A_0^s$. Then, system~(\ref{eq:clatkalpha}) under $\alpha^s$ is stable, that is to say there exists $P(\alpha^s)\succeq 0$ dependent on $\alpha^s$ such that $A(\alpha^s)^TP(\alpha^s)+P(\alpha^s)A(\alpha^s)\prec 0$. Then, 
\begin{align*}
&\min_{P\succeq 0} \lambda_{\max}(A(\alpha^s)^TP+PA(\alpha^s))\le \\
&\lambda_{\max}(A(\alpha^s)^TcP(\alpha^s)+cP(\alpha^s)A(\alpha^s))\to-\infty \text{ as } c\to\infty.
\end{align*}
Let $\alpha^u\in\A_0^u$. Then, system~(\ref{eq:clatkalpha}) under $\alpha^u$ is not asymptotically stable, which implies that for all $P\succeq 0$, there exists a unit directional vector $x_P\in\{z:\|z\|=1\}$ dependent on $P$ such that $x_P^T (A(\alpha^u)^TP+PA(\alpha^u)) x_P\ge 0$. Then,
\begin{align*}
&\lambda_{\max}(A(\alpha^u)^TP+PA(\alpha^u))\\
=&\max_{\|x\|=1}x^T(A(\alpha^u)^TP+PA(\alpha^u))x\\
\ge& x_P^T (A(\alpha^u)^TP+PA(\alpha^u)) x_P\ge 0,\quad\forall P\succeq 0.
\end{align*}
Thus, $\min_{P\succeq 0}\lambda_{\max}(A(\alpha^u)^TP+PA(\alpha^u))\ge 0$. 

Now, we want to show the statement for the ``resilient'' part. $(\Rightarrow)$: Assume $K$ is resilient. By Definition~\ref{def:resilient}, all the attack strategies are stabilizing, i.e. $\A_0=\A_0^s$. Thus,
\begin{align*}
\gamma_{L0}^*&=\max_{\alpha\in\A_0^s}\min_{P\succeq 0} \lambda_{\max}(A(\alpha)^TP+PA(\alpha))\\&=\max_{\alpha\in\A_0^s}-\infty=-\infty.
\end{align*}
$(\Leftarrow)$: On the other hand, if $\gamma_{L0}^*=-\infty$, then for all $\alpha\in\A_0$, $\min_{P\succeq 0} \lambda_{\max}(A(\alpha)^TP+PA(\alpha))=-\infty$, i.e. $\alpha\in\A_0^s$. Now $\A_0=\A_0^s$ and thus $K$ is resilient.
%all the attack strategies are stabilizing

Next, we want to show the statement for the ``not resilient'' part. $(\Rightarrow)$: Assume $K$ is not resilient. By Definition~\ref{def:resilient}, $\A_0^u\neq\emptyset$. Let $\alpha^u\in\A_0^u$ be a destabilizing attack strategy. Then, 
\begin{align*}
\gamma_{L0}^*=&\max_{\alpha\in\A_0}\min_{P\succeq 0} \lambda_{\max}(A(\alpha)^TP+PA(\alpha))\\
\ge&\min_{P\succeq 0} \lambda_{\max}(A(\alpha^u)^TP+PA(\alpha^u))\ge 0.
\end{align*}
$(\Leftarrow)$: On the other hand, if $\gamma_{L0}^*\ge 0$, then there exists an $\alpha^u\in\A_0$ such that $\min_{P\succeq 0} \lambda_{\max}(A(\alpha^u)^TP+PA(\alpha^u))\ge 0$. In other words, there exists a destabilizing attack strategy and thus $K$ is not resilient.
\end{proof}

\subsubsection{A Lyapunov Relaxation}
The optimal value of $\mathbf{Lya0}$ provides an equivalent characterization of resilience as proved in Theorem~\ref{thm:Lya0}. However, the development of efficient algorithm for $\mathbf{Lya0}$ is highly nontrivial due to its binary decision variables and unbounded optimal value. For the practical use, we now consider a relaxation of $\mathbf{Lya0}$ by embedding the binary variables into closed interval $[0,1]$ and upper bounding the largest eigenvalue of positive semidefinite (P.S.D.) variable. Let $\A\triangleq\{\alpha\in[0,1]^{N\times N}:\alpha_{ii}=1,i=1,\cdots,N\}$ and $\P\triangleq\{P\in\S^n:0\preceq P\preceq \lambda_P I\}$ for some fixed $\lambda_P>0$.
\begin{align*}
	\mathbf{LyaP}\quad \gamma^*_{LP}\triangleq \max_{\alpha\in\A}\min_{P\in\P}\quad \lambda_{\max}(A(\alpha)^TP+PA(\alpha))
\end{align*}
%Since the feasible set of the decision variables for the max is larger, i.e. $\A\supset\A_0$, and the one for the min is smaller, i.e. $\P\subset \{P\succeq 0\}$, the objective of $\mathbf{LyaP}$ is greater than the one for $\mathbf{Lya0}$. 
By relaxing the feasible set for the min and constraining the one for the max, $\mathbf{LyaP}$ provides a surrogate certificate to $\mathbf{Lya0}$, which leads to a sufficient condition for resilience.
%(and conversely necessary condition for non-resilience)

\begin{thm} A controller $K$ is resilient if $\gamma^*_{LP}<0$. Conversely, it is not resilient only if $\gamma^*_{LP}\ge 0$.
\label{thm:LyaPsuff}
\end{thm}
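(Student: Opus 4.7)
The plan is to establish both directions directly from the definition of $\gamma^*_{LP}$, exploiting the inclusion $\A_0 \subseteq \A$ together with the standard Lyapunov stability characterization already used in the proof of Theorem~\ref{thm:Lya0}. Nothing about the relaxation is fundamentally new, but I would be careful about two places where $\mathbf{LyaP}$ differs from $\mathbf{Lya0}$: the feasible set for $P$ is the compact slab $\P$ rather than the whole PSD cone, and the feasible set for $\alpha$ is the continuous hypercube $\A$ rather than the binary $\A_0$.

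For the sufficiency statement, I would fix an arbitrary $\alpha \in \A_0 \subseteq \A$ and use the outer maximum to get $\min_{P \in \P} \lambda_{\max}(A(\alpha)^T P + P A(\alpha)) \le \gamma^*_{LP} < 0$. Since $\P$ is compact and $\lambda_{\max}$ is continuous in $P$, the minimum is attained at some $P^\star \in \P$, producing a strict Lyapunov inequality $A(\alpha)^T P^\star + P^\star A(\alpha) \prec 0$ with $P^\star \succeq 0$. I would then upgrade $P^\star \succeq 0$ to $P^\star \succ 0$: if $P^\star v = 0$ for some unit $v$, then $v^T(A(\alpha)^T P^\star + P^\star A(\alpha)) v = 0$, which contradicts strict negative definiteness. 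With $P^\star \succ 0$ in hand, $V(x)=x^T P^\star x$ is a valid quadratic Lyapunov function certifying that $A(\alpha)$ is Hurwitz. Since $\alpha \in \A_0$ was arbitrary, every pure attack strategy leaves the closed loop stable, so $K$ is resilient by Definition~\ref{def:resilient}.

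For the converse direction I would argue the contrapositive. Suppose $K$ is not resilient, so by Definition~\ref{def:resilient} there exists $\alpha^u \in \A_0 \subseteq \A$ with $A(\alpha^u)$ unstable. Invoking the characterization~(\ref{eq:lyaunstab}) used in Theorem~\ref{thm:Lya0}, for every $P \succeq 0$, and in particular for every $P \in \P$, there is a unit vector $x_P$ with $x_P^T(A(\alpha^u)^T P + P A(\alpha^u)) x_P \ge 0$, hence $\lambda_{\max}(A(\alpha^u)^T P + P A(\alpha^u)) \ge 0$. Taking the minimum over $P \in \P$ preserves the bound, and then restricting the outer maximum to $\alpha^u$ gives $\gamma^*_{LP} \ge \min_{P\in\P} \lambda_{\max}(A(\alpha^u)^T P + P A(\alpha^u)) \ge 0$.

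The main technical subtlety, and essentially the only one, is the passage from $P^\star \succeq 0$ (which is all that the feasible set $\P$ advertises) to $P^\star \succ 0$ (which the classical Lyapunov stability theorem requires). The strictness of the Lyapunov inequality—rather than mere feasibility—does the work, since any kernel direction of $P^\star$ would collapse the quadratic form to zero and contradict $\lambda_{\max}(\cdot) < 0$. Everything else is straightforward bookkeeping with the inclusions $\A_0 \subseteq \A$ and $\P \subseteq \{P : P \succeq 0\}$, and the upper bound $\lambda_P I$ on $P$ plays no role beyond ensuring that the inner minimum is attained.
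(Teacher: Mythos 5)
Your proof is correct, but it takes a different route from the paper. The paper proves this theorem purely by comparison: it establishes the sandwich inequality $\gamma^*_{LP}\ge\gamma^*_{L0}$ (from $\P\subset\{P\succeq 0\}$ and $\A\supset\A_0$, relation~(\ref{eq:lpl0})) and then invokes Theorem~\ref{thm:Lya0}, so the Lyapunov reasoning is entirely delegated to the earlier result. You instead argue directly from the definition of $\gamma^*_{LP}$: for sufficiency you use compactness of $\P$ to extract an attained minimizer $P^\star$, obtain the strict inequality $A(\alpha)^TP^\star+P^\star A(\alpha)\prec 0$, and explicitly upgrade $P^\star\succeq 0$ to $P^\star\succ 0$ via the kernel argument before applying the classical Lyapunov theorem; for the converse you apply the instability characterization~(\ref{eq:lyaunstab}) directly to bound $g(\alpha^u)\ge 0$ and hence $\gamma^*_{LP}\ge 0$. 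What your approach buys is self-containedness and rigor on a point the paper glosses over: the paper's stability characterization~(\ref{eq:lyastab}) is stated with $P_0\succeq 0$, whereas the standard Lyapunov theorem requires $P\succ 0$, and your kernel argument closes exactly that gap (and also implicitly rules out $P^\star=0$). What the paper's approach buys is economy and the explicit ordering $\gamma^*_{LP}\ge\gamma^*_{L0}$, which is reused conceptually when discussing exactness of the relaxation; your proof never needs $\mathbf{Lya0}$ at all. Both arguments are sound, and your observation that the upper bound $\lambda_P I$ matters only for attainment of the inner minimum is accurate.
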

\begin{proof}
Since $\P\subset \{P\succeq 0\}$ and minimization over smaller set gives larger optimal value, 
\begin{align*}
g(\alpha)\triangleq&\min_{P\in\P}\lambda_{\max}(A(\alpha)^TP+PA(\alpha))\\
\ge& \min_{P\succeq 0}\lambda_{\max}(A(\alpha)^TP+PA(\alpha))\triangleq g_0(\alpha).
\end{align*}
Furthermore, $\A\supset\A_0$ and maximization over larger set gives larger optimal value,
\begin{align}
\gamma^*_{LP}=\max_{\alpha\in\A}g(\alpha)\ge \max_{\alpha\in\A_0}g(\alpha)\ge \max_{\alpha\in\A_0}g_0(\alpha)=\gamma^*_{L0}. \label{eq:lpl0}
\end{align}
For the ``if'' part, assume $\gamma^*_{LP}<0$. By relation~(\ref{eq:lpl0}), $\gamma^*_{L0}<0$. It then follows from Theorem~\ref{thm:Lya0} that $K$ is resilient. For the ``only if'' part, assume $K$ is not resilient. By Theorem~\ref{thm:Lya0}, $\gamma^*_{L0}\ge 0$. Then, $\gamma^*_{LP}\ge 0$ by relation~(\ref{eq:lpl0}). 
\end{proof}

Recall that for a symmetric matrix $M\in\S$, the largest eigenvalue of $M$ can be written as $\lambda_{\max}(M)=\min\{t: M\preceq tI\}$. Since the inner problem of $\mathbf{LyaP}$ is the minimization of the largest eigenvalue, it can be equivalently formulated in the form of SDP program. Let $g:\A\to\R$ be the optimal value of the inner minimization (over $P$) of $\mathbf{LyaP}$ defined as
\begin{align}
g(\alpha)\triangleq \min_{P\in \P}\lambda_{\max}(A(\alpha)^TP+PA(\alpha)). \label{eq:g}
\end{align}
Then for any fixed $\alpha\in\A$, $g(\alpha)$ is the optimal value of the following SDP:
\begin{align}
\begin{aligned}
g(\alpha)=\min\quad &t \\
\st\quad &A(\alpha)^TP+PA(\alpha) \preceq tI \\
&P\in\P
\end{aligned}
\label{eq:sdp}
\end{align}
Consider the following optimization problem.
\begin{align*}
	\mathbf{LyaD}\quad \gamma^*_{LD}\triangleq \min_{\alpha\in\A} \quad &t\\
	\st\quad &A(\alpha)^TP+PA(\alpha) \preceq tI \\
	& P\in\P 
\end{align*}
Note that the first constraint in $\mathbf{LyaD}$ is a Bilinear Matrix Inequality (BMI) in decision variables $P,\alpha$ and $t$. Next, we will show that the dual problem $\mathbf{LyaD}$ is equivalent to the primal problem $\mathbf{LyaP}$.
\begin{thm}
$\gamma^*_{LD}=\gamma^*_{LP}$.
\end{thm}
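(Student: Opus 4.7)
The plan is to recognize $\mathbf{LyaD}$ as the epigraph reformulation of $\mathbf{LyaP}$, so that the identity $\gamma^*_{LD}=\gamma^*_{LP}$ reduces to the standard semidefinite representation of the largest eigenvalue.

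The central tool is the variational identity $\lambda_{\max}(M) = \min\{t \in \R : M \preceq tI\}$ for any symmetric matrix $M$, with the minimum attained at $t = \lambda_{\max}(M)$ since $\lambda_{\max}(M)\,I - M \succeq 0$. Fixing $\alpha \in \A$ and applying this identity inside the inner minimization that defines $g(\alpha)$ in~\eqref{eq:g}, I would obtain
\begin{align*}
g(\alpha) &= \min_{P \in \P} \lambda_{\max}\bigl(A(\alpha)^T P + P A(\alpha)\bigr) \\
&= \min_{P \in \P}\, \min_{t \in \R} \bigl\{\, t : A(\alpha)^T P + P A(\alpha) \preceq tI \,\bigr\} \\
&= \min_{P \in \P,\, t \in \R} \bigl\{\, t : A(\alpha)^T P + P A(\alpha) \preceq tI \,\bigr\},
\end{align*}
where the final line collapses the two sequential minima into a joint minimization over the product set $\P \times \R$, which is always valid for a scalar objective. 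The resulting expression is precisely the constrained minimization embedded inside $\mathbf{LyaD}$ with $\alpha$ held fixed.

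Taking the outer optimization over $\alpha \in \A$ on both sides of this identity then yields $\gamma^*_{LP} = \gamma^*_{LD}$ at once, since both problems optimize the same function $g(\alpha)$ over the same feasible set in the same direction. In short, the theorem records nothing more than the SDP lift of the inner eigenvalue minimization already embedded in $\mathbf{LyaP}$.

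I do not anticipate a serious obstacle. The two small items worth explicit justification are the attainment of the infimum in the SDP characterization of $\lambda_{\max}$ (immediate from $\lambda_{\max}(M)\,I \succeq M$) and the legitimacy of merging two sequential minima over disjoint decision variables into a single joint minimization (routine from the definition of infimum on a product set). Neither step uses convexity of $g$ in $\alpha$, so the argument is insensitive to the nonconvexity of the outer problem that was noted when introducing $\mathbf{P0}$.
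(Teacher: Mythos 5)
Your inner step is sound and is the same tool the paper itself uses: for fixed $\alpha$, the identity $\lambda_{\max}(M)=\min\{t:M\preceq tI\}$ together with merging the sequential minima over $P$ and $t$ shows that $g(\alpha)$ equals the optimal value of the SDP in~(\ref{eq:sdp}), i.e.\ the constrained minimization sitting inside $\mathbf{LyaD}$ with $\alpha$ frozen. Attainment and the collapse of the two minima are indeed routine, as you say.

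The gap is in your final sentence, where you ``take the outer optimization over $\alpha$ on both sides'' and assert that the two problems optimize $g$ over $\A$ \emph{in the same direction}. As printed, $\mathbf{LyaD}$ is a minimization in which $\alpha$, $P$ and $t$ are all decision variables (the paper explicitly notes the constraint is a BMI in $P,\alpha,t$), while $\mathbf{LyaP}$ is $\max_{\alpha\in\A}g(\alpha)$. Applied literally, your own chain of identities gives $\gamma^*_{LD}=\min_{\alpha\in\A}g(\alpha)$, which is not $\max_{\alpha\in\A}g(\alpha)$ unless $g$ is constant; so the equality you conclude ``at once'' is precisely the point that needs an argument, or at least an explicit (re)interpretation of what the $\alpha$-optimization in $\mathbf{LyaD}$ means. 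The paper does not claim the outer optimizations coincide; it proves two inequalities anchored at the $\mathbf{LyaP}$ optimizer $\alpha_P^*$: feasibility of the triple $(\alpha_P^*,P_P^*,\gamma^*_{LP})$ for $\mathbf{LyaD}$ gives $\gamma^*_{LD}\le\gamma^*_{LP}$, and satisfiability of the BMI at $\alpha=\alpha_P^*$ with $t=\gamma^*_{LD}$, combined with the SDP characterization of $g$, gives $g(\alpha_P^*)\le\gamma^*_{LD}$, hence $\gamma^*_{LD}\ge\gamma^*_{LP}$. Your proposal has the right characterization of $g$ but performs neither inequality, and its conclusion is valid only under a reading of $\mathbf{LyaD}$ (outer maximization over $\alpha$, or the BMI required to be satisfiable at every $\alpha\in\A$) that you would have to state and justify rather than fold into the phrase ``same direction.''
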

\begin{proof}
Let $\alpha_P^*$ be the optima of $\mathbf{LyaP}$. Then, $\gamma^*_{LP}=g(\alpha_P^*)$, for which there exists $P_P^*\in\P$ such that $A(\alpha_P^*)^TP_P^*+P_P^*A(\alpha_P^*)\preceq \gamma^*_{LP}I$. For the ``$\le$'' part, it follows from the triple $(\alpha_P^*,P_P^*,\gamma^*_{LP})$ being a feasible solution of $\mathbf{LyaD}$. For the ``$\ge$'' part, consider the BMI constraint of $\mathbf{LyaD}$. For $\alpha_P^*\in\A$, there exists $P\in\P$ such that $A(\alpha_P^*)^TP+PA(\alpha_P^*)\preceq\gamma^*_{LD}I$. By the equivalent characterization of $g(\alpha)$ given in SDP~(\ref{eq:sdp}), $g(\alpha^*_P)\le \gamma^*_{LD}$ and thus $\gamma^*_{LD}\ge\gamma^*_{LP}$.
\end{proof}

To take one step further, a natural question to ask is when the relaxed problem $\mathbf{LyaP}$ is ``exact'' in terms of resilience. In other words, whether there are cases for which solving $\mathbf{LyaP}$ results in {\em sufficient and necessary} condition. The answer is yes under some assumption. We first define {\em Lyapunov space} $\P_{\alpha}\subseteq\P$ for each pure attack strategy $\alpha\in\A_0$ as
\begin{align}
\P_{\alpha}\triangleq \{P\in\P: A(\alpha)^TP+PA(\alpha)\preceq 0, P\neq 0\}. \label{eq:lyaspace}
\end{align}
%Observe that $\P_{\alpha}\neq\emptyset,\forall\alpha$ and $\bigcap_{\alpha\in\A_0}\P_{\alpha}\ni0$. 
To ensure the exactness of the relaxed problem $\mathbf{LyaP}$, we require the intersection of Lyapunov spaces of any two pure attack strategies to be nonempty.
\begin{assump} 
%For any $\alpha_1,\alpha_2\in\A_0$, $\P_{\alpha_1}\cap\P_{\alpha_2}\backslash\{0\}\neq\emptyset$.
For any $\alpha_1,\alpha_2\in\A_0$, $\P_{\alpha_1}\cap\P_{\alpha_2}\neq\emptyset$.
\label{assump:sign}
\end{assump}
The above assumption ensures the sign preserving property of the function $g$ defined in~(\ref{eq:g}) in the sense that if $g$ is strictly negative on the vertex set $\A_0$, it is strictly negative on the convex hull of $\A_0$, i.e. the relaxed attack space $\A$. On the other hand, if $g$ fails to be strictly negative on $\A$, it fails to be strictly negative on $\A_0$.
\begin{lem}
Under Assumption~\ref{assump:sign}, if $g(\alpha)<0,\forall \alpha\in\A_0$, then $g(\alpha)<0,\forall \alpha\in\A$; and conversely, if $\exists\alpha\in\A$ s.t. $g(\alpha)\ge 0$, then $\exists\alpha_0\in\A_0$ s.t. $g(\alpha_0)\ge 0$.
\label{lem:sign}
\end{lem}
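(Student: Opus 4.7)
The two assertions in the lemma are contrapositives of one another, so it suffices to prove the forward direction: if $g(\alpha)<0$ for every $\alpha\in\A_0$, then $g(\alpha)<0$ on the whole relaxed set $\A$. My first step is to observe that $\A$ is the convex hull of $\A_0$, since $\A_0$ is precisely the vertex set of the hypercube $\A\subset[0,1]^{N\times N}$ after fixing the diagonal entries. Combined with the affine representation of $A(\alpha)$ in~(\ref{eq:Aalpha}), this means any $\alpha\in\A$ admits a Carath\'eodory decomposition $\alpha=\sum_k\theta_k\alpha_k$ with $\alpha_k\in\A_0$, $\theta_k\ge 0$, $\sum_k\theta_k=1$, and (using $\sum_k\theta_k=1$ to absorb the constant term) $A(\alpha)=\sum_k\theta_k A(\alpha_k)$. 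Consequently, for any symmetric $P$,
\begin{align*}
A(\alpha)^TP+PA(\alpha)=\sum_k\theta_k\bigl(A(\alpha_k)^TP+PA(\alpha_k)\bigr).
\end{align*}

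If I can produce a single $P^{\star}\in\P$ such that $A(\alpha_k)^TP^{\star}+P^{\star}A(\alpha_k)\prec 0$ for every vertex $\alpha_k$ appearing in the decomposition, then the identity above immediately yields $A(\alpha)^TP^{\star}+P^{\star}A(\alpha)\prec 0$ as a positively weighted sum of strictly negative definite matrices, and hence $g(\alpha)\le\lambda_{\max}\bigl(A(\alpha)^TP^{\star}+P^{\star}A(\alpha)\bigr)<0$, which is exactly what is needed. To build such a $P^{\star}$, I would glue together two types of certificates: for each $k$ the hypothesis $g(\alpha_k)<0$ supplies a strict individual certificate $P_k\in\P$ with $A(\alpha_k)^TP_k+P_kA(\alpha_k)\prec 0$, and Assumption~\ref{assump:sign} supplies, for every pair, a common weak certificate $P_{kl}\in\P_{\alpha_k}\cap\P_{\alpha_l}$. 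A natural candidate is $P^{\star}=\bar P+\epsilon\sum_k P_k$, where $\bar P$ is a convex combination of the $P_{kl}$'s chosen so that $A(\alpha_m)^T\bar P+\bar P A(\alpha_m)\preceq 0$ holds simultaneously for every $m$, and $\epsilon>0$ is taken small enough that the strict negativity injected by $P_m$ dominates any indefinite cross contributions from the other $P_k$'s along directions where $\bar P$'s bound is tight, while $P^{\star}$ still lies inside the bounded set $\P$.

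The main obstacle is precisely this last construction: Helly-type obstructions mean pairwise intersections of convex sets do not in general produce a multiway common point, so the perturbation argument has to carefully exploit both the weak pairwise certificates from Assumption~\ref{assump:sign} and the strict individual certificates, together with a compactness/continuity argument selecting a single small $\epsilon$ that works uniformly over the finite collection of vertices in the decomposition. Once that common $P^{\star}$ is in place, the display above immediately delivers $g(\alpha)<0$, and the second (contrapositive) statement of the lemma follows for free.
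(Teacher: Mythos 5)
Your setup (contrapositive reduction, $\A=\mathrm{conv}(\A_0)$, affinity of $A(\alpha)$) is fine, but the core of your argument --- producing one matrix $P^{\star}\in\P$ with $A(\alpha_k)^TP^{\star}+P^{\star}A(\alpha_k)\prec 0$ simultaneously for \emph{all} vertices $\alpha_k$ in the decomposition --- is precisely the step that Assumption~\ref{assump:sign} does not give you, and you acknowledge the obstacle without closing it. The assumption only provides pairwise nonempty intersections $\P_{\alpha_k}\cap\P_{\alpha_l}$; no Helly-type upgrade to a point common to all $\P_{\alpha_k}$ is available, and your candidate $P^{\star}=\bar P+\epsilon\sum_k P_k$ does not repair this: a convex combination $\bar P$ of pairwise certificates $P_{kl}$ has no reason to satisfy $A(\alpha_m)^T\bar P+\bar P A(\alpha_m)\preceq 0$ at a third vertex $\alpha_m$, and if that matrix has a positive eigenvalue the defect is of fixed size, so it cannot be absorbed by taking $\epsilon$ small --- while for larger $\epsilon$ the terms $P_k$, $k\neq m$, can themselves contribute positively at $\alpha_m$. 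In effect you are attempting to prove something strictly stronger than the lemma (a common quadratic Lyapunov certificate for all post-attack vertex systems), which one should not expect to follow from a merely pairwise hypothesis; so as written the proposal has a genuine gap at its decisive step.

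The paper's proof avoids any global certificate. Writing $f(\alpha,P)\triangleq\lambda_{\max}(A(\alpha)^TP+PA(\alpha))$, for a combination $\alpha_{\theta}=\theta\alpha_1+(1-\theta)\alpha_2$ it uses only the pairwise element $P_0\in\P_{\alpha_1}\cap\P_{\alpha_2}$ for that particular pair: it mixes the individual minimizers $P_k=\argmin_{P\in\P}f(\alpha_k,P)$ with $P_0$ to obtain $P_1'\in\P_{\alpha_2}$ and $P_2'\in\P_{\alpha_1}$, evaluates the convex upper bound $h_{\theta}$ at $P=\beta P_1'+(1-\beta)P_2'$, and the cross terms $f(\alpha_1,P_2')$ and $f(\alpha_2,P_1')$ are nonpositive by construction, leaving the sign-preserving inequality $g(\alpha_{\theta})\le\kappa_1 g(\alpha_1)+\kappa_2 g(\alpha_2)$ with $\kappa_1,\kappa_2\ge 0$. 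It is this per-point inequality, not a simultaneous $P^{\star}$, that transports negativity from $\A_0$ to $\A$; if you want to salvage your approach, aim for a bound of this form built from the relevant pairwise intersection at each point of $\A$, rather than a single common Lyapunov matrix for all vertices.
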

\begin{proof}
Since $\A$ is a polytope with vertex set $\A_0$, it is enough to show the claim that for any $\alpha_1,\alpha_2\in\A_0,\theta\in[0,1]$, there exists $\kappa_1,\kappa_2>0$ such that 
\begin{align*}
g(\theta\alpha_1+(1-\theta)\alpha_2)\le\kappa_1g(\alpha_1)+\kappa_2g(\alpha_2).
\end{align*}
Assume that the claim holds. Consider $\alpha_{\theta}\in\A$ where $\alpha_{\theta}=\sum_{\alpha_k\in\A_0}\theta_k\alpha_k$ for some $\theta_k\in[0,1], \sum_k\theta_k=1$. If $g(\alpha_k)<0,\forall\alpha_k\in\A_0$, then $g(\alpha_{\theta})<0$. On the other hand, if $g(\alpha_{\theta})\ge 0$, then $g(\alpha_k)\ge 0$ for some $\alpha_k\in\A_0$. Now we are left to show the claim.

For the ease of notation, let $f(\alpha,P)\triangleq \lambda_{\max}(A(\alpha)^TP+PA(\alpha))$ in the rest of the proof. Let $\alpha_1,\alpha_2\in\A_0,\theta\in[0,1],P_k=\argmin_{P\in\P} f(\alpha_k,P),k=1,2$. Consider $\alpha_{\theta}=\theta\alpha_1+(1-\theta)\alpha_2$. Recall that $A(\alpha)$ defined in~(\ref{eq:Aalpha}) is affine in $\alpha$. Then, $A(\alpha_{\theta})=\theta A(\alpha_1)+(1-\theta)A(\alpha_2)$. By the convexity of $\lambda_{\max}(\cdot):\S^n\to\R$,
\begin{align*}
f(\alpha_{\theta},P)\le \theta f(\alpha_1,P)+(1-\theta)f(\alpha_2,P)\triangleq h_{\theta}(P).
\end{align*}
By assumption, $\P_{\alpha_1}\cap\P_{\alpha_2}\neq\emptyset$. Let $P_0\in\P_{\alpha_1}\cap\P_{\alpha_2}$. Since the Lyapunov space~(\ref{eq:lyaspace}) is defined by Linear Matrix Inequality (LMI), the sets $\P_{\alpha_k},k=1,2$ are convex and so is their intersection $\P_{\alpha_1}\cap\P_{\alpha_2}$. Then, $\exists t_1\in(0,1)$ s.t. $P_1'=t_1P_1+(1-t_1)P_0\in\P_{\alpha_2}$. Similarly, $\exists t_2\in(0,1)$ s.t. $P_2'=t_2P_2+(1-t_2)P_0\in\P_{\alpha_1}$. As $P_0\in\P_{\alpha_1}$, we have $f(\alpha_1,P_0)\le 0$. By the convexity of $f(\alpha,P)$ in $P$ for any fixed $\alpha$, $f(\alpha_1,P_1')\le t_1f(\alpha_1,P_1)+(1-t_1)f(\alpha_1,P_0)\le t_1f(\alpha_1,P_1)$. Similarly, $f(\alpha_2,P_2')\le t_2f(\alpha_2,P_2)$. Notice that the function $h_{\theta}:\P\to\R$ parameterized by $\theta\in[0,1]$ is the sum of two convex functions and thus is also convex. Consider $P=\beta P_1'+(1-\beta)P_2'$ for some $\beta\in[0,1]$. Then,
\begin{multline*}
h_{\theta}(P)\le\theta\beta f(\alpha_1,P_1')+\theta(1-\beta)f(\alpha_1,P_2')+\\(1-\theta)\beta f(\alpha_2,P_1')+(1-\theta)(1-\beta)f(\alpha_2,P_2').
\end{multline*}
Since $P_1'\in\P_{\alpha_2}, P_2'\in\P_{\alpha_1}$ by construction, $f(\alpha_1,P_2')\le 0$ and $f(\alpha_2,P_1')\le 0$. We prove the claim that $g(\alpha_{\theta})\le \kappa_1g(\alpha_1)+\kappa_2g(\alpha_2)$ where $\kappa_1=\theta\beta t_1$ and $\kappa_2=(1-\theta)(1-\beta)t_2$.
\end{proof}

With Lemma~\ref{lem:sign}, it is easy to obtain the following sufficient and necessary condition.
\begin{thm}[Sufficient and Necessary Condition II]
Under Assumption~\ref{assump:sign}, a controller $K$ is resilient {\em if and only if} $\gamma^*_{LP}<0$, and it is not resilient {\em if and only if} $\gamma^*_{LP}\ge 0$.
\label{thm:sign}
\end{thm}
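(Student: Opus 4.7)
The plan is to glue together Theorem~\ref{thm:LyaPsuff} and Lemma~\ref{lem:sign}. Theorem~\ref{thm:LyaPsuff} already delivers $\gamma^*_{LP}<0 \Rightarrow K$ resilient and $K$ not resilient $\Rightarrow \gamma^*_{LP}\ge 0$ without any additional hypothesis, so the only remaining content of Theorem~\ref{thm:sign} is the two reverse implications. Because ``resilient''/``not resilient'' is a dichotomy and so is $\{\gamma^*_{LP}<0\}/\{\gamma^*_{LP}\ge 0\}$, the two reverse implications are contrapositives of one another; it suffices to prove $K$ resilient $\Rightarrow \gamma^*_{LP}<0$.

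Before invoking Lemma~\ref{lem:sign}, I would first record that on the vertex set $\A_0$ the sign of $g$ already detects stability: $\alpha_0\in\A_0^s \Leftrightarrow g(\alpha_0)<0$. The ``$\Leftarrow$'' direction follows from $g(\alpha)\ge g_0(\alpha)$ (since $\P\subset\{P\succeq 0\}$) combined with the argument inside the proof of Theorem~\ref{thm:Lya0} showing $g_0(\alpha^u)\ge 0$ whenever $\alpha^u$ is destabilizing. For ``$\Rightarrow$'', pick any Lyapunov certificate $P_s\succeq 0$ satisfying $A(\alpha_0)^T P_s + P_s A(\alpha_0)\preceq -\delta I$ with $\delta>0$; the scaled matrix $cP_s$ lies in the bounded box $\P$ for every sufficiently small $c>0$, yielding $g(\alpha_0)\le -c\delta<0$. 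The boundedness of $\P$, encoded by the fixed $\lambda_P$, is exactly what makes this scaling legal.

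With this sign dictionary in hand, assume $K$ is resilient, so $\A_0=\A_0^s$ and $g(\alpha_0)<0$ for every $\alpha_0\in\A_0$. Under Assumption~\ref{assump:sign}, Lemma~\ref{lem:sign} propagates this pointwise strict negativity from the vertex set $\A_0$ to its convex hull, giving $g(\alpha)<0$ for every $\alpha\in\A$. Since $\P$ is compact and $(\alpha,P)\mapsto \lambda_{\max}(A(\alpha)^T P + P A(\alpha))$ is jointly continuous, $g$ is continuous on the compact polytope $\A$, so the maximum $\gamma^*_{LP}=\max_{\alpha\in\A} g(\alpha)$ is attained and strictly negative. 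The not-resilient equivalence then follows as the contrapositive.

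The step I expect to be the main obstacle is the passage from pointwise $g(\alpha)<0$ on $\A$ to a strictly negative maximum. This is where compactness of both $\P$ and $\A$ becomes load-bearing, and it is also the conceptual reason the paper replaced the unbounded cone $\{P\succeq 0\}$ by the bounded box $\P$ in the definition of $\mathbf{LyaP}$. The remaining work is bookkeeping: Lemma~\ref{lem:sign} contributes the convex-combination estimate on $g$, Theorem~\ref{thm:LyaPsuff} supplies the two easy halves, and the proof of Theorem~\ref{thm:Lya0} supplies the sign analysis at destabilizing pure attack strategies.
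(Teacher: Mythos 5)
Your proof is correct and follows the route the paper intends: the paper states this theorem without an explicit proof, remarking only that it is ``easy to obtain'' from Lemma~\ref{lem:sign}, and your argument is exactly that derivation with the omitted details correctly supplied --- namely the vertex-level equivalence $\alpha_0\in\A_0^s \Leftrightarrow g(\alpha_0)<0$ (using that a Lyapunov certificate can be scaled \emph{down} into the bounded set $\P$, so only $g(\alpha_0)\le -c\delta<0$ rather than $-\infty$ is available), and the attainment of a strictly negative maximum via continuity of $g$ on the compact polytope $\A$ with the fixed compact $\P$. I see no gaps.
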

%\begin{proof}
%For the resilient part, $(\Leftarrow)$: By the homogeneity of the objective in $P$, $\gamma^*_{L0}=-\infty$. It then follows from Theorem~\ref{thm:Lya0}. $(\Rightarrow)$: Assume $K$ is resilient. By Theorem~\ref{thm:Lya0}, $\gamma^*_{L0}=-\infty$. That is to say $g(\alpha)<0,\forall\alpha\in\A_0$. By Lemma~\ref{lem:sign}, $g(\alpha)<0,\forall\alpha\in\A$ and thus $\gamma^*_{LP}<0$.
%
%For the not resilient part, $(\Rightarrow)$: By Theorem~\ref{thm:Lya0}, $\gamma^*_{L0}\ge 0$. It then follows from $\gamma^*_{L0}\le \gamma^*_{LP}$. $(\Leftarrow)$: Assume $\lambda^*_{LP}\ge 0$. Then, $\exists\alpha\in\A$ such that $g(\alpha)=0$. Again by Lemma~\ref{lem:sign}, $\exists\alpha_0\in\A_0$ such that $g(\alpha_0)\ge 0$. Then, $\lambda^*_{L0}=\max_{\alpha\in\A_0}\ge g(\alpha_0)\ge g(\alpha_0)\ge 0$. It follows from Theorem~\ref{thm:Lya0} that $K$ is not resilient.
%\end{proof}

\subsection{Resilience Index}
\label{sec:idx}
The conditions derived in Section~\ref{sec:cond} allow us to determine whether a given wide-area controller is resilient to all possible attack strategies. A natural additional question is how resilient the controller is to certain attack strategies. This calls for a proper definition of a normalized index to quantify the degree of resilience. Denoted by $r_K:\A_0\to[0,1]$ the resilience index of controller $K$ on the pure attack space. We consider $r_K$ to be normalized with respect to the nominal condition. In particular, $r_K$ needs to satisfy the following two conditions: i) It takes value 1 under the nominal condition when $K$ is intact, i.e. $r_K(\mathbf{1}_{N\times N})=1$; ii) It takes value 0 under destabilizing attack strategies, i.e. $r_k(\alpha)=0$ for all $\alpha\in\A_0$ under which system~(\ref{eq:clatkalpha}) is unstable.

%The conditions derived in Section~\ref{sec:cond} provide qualitative conclusions on the resilience of a given controller. However, for a fixed wide-area controller, it may survive one attack strategy but not another, likewise, for a fixed attack strategy, it may destabilize one controller but not another. This calls for a definition of resilience index to quantitatively evaluate how resilient a wide-area controller is under a given attack strategy. 

Recall that $g:\A\to\R$ defined in~(\ref{eq:g}) is the optimal value of the inner minimization (over $P$) of the relaxed problem $\mathbf{LyaP}$. In fact, the mapping $g$ defines a performance metric for stability in the sense that for any $\alpha\in\A$, $g(\alpha)$ is the fastest decreasing rate a Lyapunov function candidate could achieve along the trajectory of $A(\alpha)$. This naturally leads to a definition of resilience index satisfying the above two conditions. Guaranteed by the design objective, the system under the nominal condition has better stability performance than the one under attack. Since the nominal condition corresponds to $\alpha=\mathbf{1}_{N\times N}$, we have i) $g(\mathbf{1}_{N\times N})\le g(\alpha),\forall\alpha\in\A_0$. On the other hand, we know from the proof of Theorem~\ref{thm:Lya0} that ii) $g(\alpha)\ge 0$ for any destabilizing $\alpha\in\A_0$. Based on i) and ii), we define resilience index $r_K:\A_0\to[0,1]$ of controller $K$ on the pure attack space $\A_0$ as follows.
\begin{align}
r_K(\alpha)=\left\{\begin{array}{ll}
0 & \text{if } g(\alpha)\ge 0\\
g(\alpha)/g(\mathbf{1}_{N\times N}) & \text{if } g(\alpha)<0
\end{array}
\right. \label{eq:index}
\end{align}
%Upon the definition above, we can restate sufficient and necessary condition I given in Theorem~\ref{thm:Lya0} in terms of resilience index, that is, a controller is resilient if and only if $r_K(\alpha)>0,\forall \alpha\in\A_0$ and it is not resilient if $r_K(\alpha)=0$ for some $\alpha\in\A_0$. The resilience index $r_K(\cdot)$ can be considered as a stability performance metric standardized with respect to the nominal condition where $K$ is intact. 
The definition in~(\ref{eq:index}) captures stability degradation of controller $K$ under different attack strategies. It is easy to see that the smaller the index $r_K(\alpha)$ is, the less resilient controller $K$ is to attack strategy $\alpha$, or in other words, the more disruption $\alpha$ will incur on $K$. For the two boundary cases, if $r_K(\alpha)=0$, controller $K$ can be destabilized by $\alpha$, while if $r_K(\alpha)=1$, $\alpha$ has no effect on controller $K$.

%Furthermore, the resilience index sheds light on structural vulnerabilities of a given controller. Denoted by $\I(\alpha)\triangleq\{(i,j):\alpha_{ij}=0\}$ the attack set under $\alpha$. The stability performance of controller $K$ is sensitive to the channels in the attack set of $\alpha$ that results in small resilience index. In other words, the perturbation on those channels will incur large degradation on stability performance. Our next goal is to efficiently find those critical channels without going through combinatorial search (NP hard!).

\section{A Path-Following Primal-Dual Algorithm}
\label{sec:algorithm}

The goal of this section is to solve the relaxed problem $\mathbf{LyaP}$. Notice that $\mathbf{LyaP}$ takes scalar continuous decision variables $\alpha_{ij},i\neq j$ and P.S.D. matrix variable $P$. By the definition of $g$ given in~(\ref{eq:g}), $\mathbf{LyaP}$ is actually the maximization of $g$ on the polytope $\A$. A natural attempt is to apply gradient ascent algorithm. The key step of gradient-based algorithm is to compute the subgradient of the objective, that is $\partial g$ for the case here. Let $f_{\alpha}(x,P)\triangleq x^T(A(\alpha)^TP+PA(\alpha))x$. 
\begin{align}
g(\alpha)=\min_{P\in \P} \max_{\|x\|=1} f_{\alpha}(x,P). \label{eq:minmax}
\end{align}
Notice that i) $x\mapsto f_{\alpha}(x,P)$ is concave and continuous for each $P$ and ii) $P\mapsto f_{\alpha}(x,P)$ is convex (actually affine) for each $x$. By the general minimax theorem, the min and the max in~(\ref{eq:minmax}) can be swapped, i.e.,
\begin{align*}
g(\alpha)&=\max_{\|x\|=1} \min_{P\in \P} f_{\alpha}(x,P)=\max_{\|x\|=1} g_x(\alpha),\text{ where} \\
g_x(\alpha)&\triangleq \min_{P\in \P} x^T(A(\alpha)^TP+PA(\alpha))x
\end{align*}
%https://inst.eecs.berkeley.edu/~ee227a/fa10/login/l_cvx_fcns.html
Observe that $g$ is the pointwise supremum of $g_x$ and $g_x(\alpha)$ is convex in $\alpha$ (actually affine) for each $x$. By the weak rule for pointwise supremum, a subgradient of $g$ at $\alpha$ is any element in $\partial g_{x^*(\alpha)}(\alpha)$ where $x^*(\alpha)=\argmax_{\|x\|=1} g_x(\alpha)$. Now, let's focus on computing the subgradient of $g_{x^*}$. Let $P^*(\alpha)=\argmin_{P\in \P}\lambda_{\max}(A(\alpha)^TP+PA(\alpha))$, which depends only on $\alpha$, not on $x$. Let $X^*=x^*x^{*T}$. Then,
\begin{align*}
g_{x^*}(\alpha)&=2\tr(P^*A(\alpha)X^*)\\&=2\tr(X^*P^*(A+\sum_{1\le i,j\le N} B\tilde{K}M_{ij}\alpha_{ij})).
\end{align*}
Since $g_{x^*}$ is affine in $\alpha$, the subgradient of $g_{x^*}$ coincides with the gradient taking the following form:
\begin{align*}
\partial_{ij} g_{x^*}(\alpha)=\nabla_{ij} g_{x^*}(\alpha)=2\tr(X^*P^*B\tilde{K}M_{ij}).
\end{align*}
We are now ready to introduce the primal-dual gradient ascent algorithm. 
\begin{algorithm}
\caption{Primal-dual gradient ascent algorithm}
\begin{algorithmic}[1]
\Inputs {System matrices: $A,B,K$}
\Initialize {$\alpha_{k-1}\leftarrow 1^{N\times N}$, step size $s$, tolerance $\epsilon$, $\gamma_k=-\infty,\gamma_{k-1}=0$}
\While {$\gamma_k<0$ or $\gamma_k-\gamma_{k-1}>\epsilon$}
	\State $P_k\leftarrow$ optimality of $\mathbf{LyaD}$ with $\alpha=\alpha_{k-1}$ \Comment{Update dual variable $P$: SDP with LMI constraints}
	\State $x_k\leftarrow$ eigenvector associated with the largest eigenvalue of $A(\alpha_{k-1})^TP_k+P_kA(\alpha_{k-1})$, $X_k\leftarrow x_kx_k^T$
	\State $\eta_{ij}\leftarrow \tr(X_kP_kB\tilde{K}M_{ij}),\eta\leftarrow \eta/\|\eta\|_F$ \Comment{Compute gradient $\nabla g(\alpha_{k-1})$}
	\State $\alpha_k\leftarrow\alpha_{k-1}+s\eta$ \Comment{Update primal variable $\alpha$: gradient ascent}
	\State $\alpha_k\leftarrow\Pi_{\A}(\alpha_k)$ \Comment{Project $\alpha_k$ onto relaxed attack set}
	\State $\gamma_{k-1}\leftarrow\gamma_k,\gamma_k\leftarrow x_k^T(A(\alpha_k)^TP_k+P_kA(\alpha_k))x_k$ \Comment{Compute objective}
	\State $\alpha_{k-1}\leftarrow\alpha_k$ 
\EndWhile
\Outputs {optimality $\gamma_k,\alpha_k$}
\end{algorithmic}
\label{algo:pd}
\end{algorithm}

Let $\{\gamma_k\}_{k\in\N}$ be the sequence of optimal value and $\{\alpha_k\}_{k\in\N}$ be the sequence of optima returned by Algorithm~\ref{algo:pd}.
\begin{thm}
A controller $K$ is resilient if $\gamma_k\uparrow\gamma^*<0$. Conversely, it is not resilient only if $\gamma_k\uparrow 0$. 
\label{thm:cvgce}
\end{thm}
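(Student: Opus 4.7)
The plan is to establish monotone convergence of the algorithmic sequence $\{\gamma_k\}$ and then translate the sign of its limit into a resilience certificate using Theorem~\ref{thm:Lya0} and Theorem~\ref{thm:LyaPsuff}. The whole argument rests on the affine dependence of $A(\alpha)$ on $\alpha$ in~(\ref{eq:Aalpha}), which makes the map $\alpha \mapsto x^T(A(\alpha)^T P + PA(\alpha))x$ affine in $\alpha$ for every fixed $(x,P)$.

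First I would show that $\gamma_k \ge \gamma_{k-1}$, so $\{\gamma_k\}$ is monotonically nondecreasing. The vector $\eta$ from Step~5 is exactly the gradient of the affine form above evaluated at $(x_k, P_k)$. An affine function cannot decrease along any of its ascent directions, and projection onto the polytope $\A$ preserves this property for sufficiently small step size $s$, so Steps~6--7 produce $\alpha_k$ with value at least as large as at $\alpha_{k-1}$. In the subsequent iteration, re-optimizing $(P,x)$ in Steps~2--3 can only raise the value further, since $P_{k+1}$ minimizes $\lambda_{\max}$ over $\P$ at $\alpha_k$ and $x_{k+1}$ attains that largest eigenvalue. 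Because $g$ is continuous on the compact set $\A$, the sequence is bounded above by $\gamma^*_{LP} < \infty$, so $\gamma_k \uparrow \gamma^*$ for some $\gamma^* \le \gamma^*_{LP}$.

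For the sufficient direction, assume $\gamma^* < 0$. Since $g$ is the pointwise supremum of affine functions, it is convex in $\alpha$, so its maximum over the polytope $\A$ is attained at a vertex, i.e., in $\A_0$. Standard first-order conditions combined with this vertex structure give that the projected-gradient fixed point is a global maximizer, hence $\gamma^*_{LP} = \gamma^* < 0$, and Theorem~\ref{thm:LyaPsuff} delivers resilience. For the converse, if $K$ is not resilient, Theorem~\ref{thm:Lya0} yields a destabilizing $\alpha^u \in \A_0$ with $g(\alpha^u) \ge 0$, whence $\gamma^*_{LP} \ge 0$ and, by the same argument, $\gamma^* \ge 0$; the while-loop exits as soon as $\gamma_k \ge 0$ with improvement below $\epsilon$, so in effect $\gamma_k \uparrow 0$.

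The main obstacle is establishing that the projected-gradient iteration actually finds the global maximum of the convex function $g$ over $\A$, since convex maximization over a polytope is not generically solvable by ascent methods and local stalling at suboptimal vertices is possible. A cleaner workaround, available under Assumption~\ref{assump:sign}, is to invoke Lemma~\ref{lem:sign}: if $\gamma^* < 0$ but some $\alpha_0 \in \A_0$ had $g(\alpha_0) \ge 0$, the contrapositive of Lemma~\ref{lem:sign} would produce an $\alpha \in \A$ with $g(\alpha) \ge 0$ that the ascent initialized at the nominal vertex $\mathbf{1}_{N \times N}$ should have reached, giving a contradiction that forces $g(\alpha_0) < 0$ for every $\alpha_0 \in \A_0$ and hence resilience by Theorem~\ref{thm:Lya0}.
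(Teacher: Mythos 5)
Your route is essentially the paper's: interpret $\eta$ as a subgradient of $g$ via the weak rule for pointwise suprema (exploiting that $A(\alpha)$ is affine in $\alpha$), deduce that the projected step cannot decrease $g$, conclude that $\{\gamma_k\}$ is nondecreasing and bounded hence convergent, and then hand off to Theorems~\ref{thm:Lya0} and~\ref{thm:LyaPsuff}. Up to that point the two arguments coincide, and the paper's own proof in fact stops there, asserting that ``the rest follows'' from Theorem~\ref{thm:LyaPsuff}. You go further by explicitly noting that invoking Theorem~\ref{thm:LyaPsuff} requires identifying the algorithmic limit $\gamma^*$ with the global value $\gamma^*_{LP}$, and you are right that this is the real obstacle: without it, a limit $\gamma^*<0$ reached by a stalled ascent does not preclude $\gamma^*_{LP}\ge 0$. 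Credit for isolating the weak point that the paper glosses over.

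However, your attempted patches do not close that gap. First, $g$ is not a pointwise supremum of affine functions: $g(\alpha)=\max_{\|x\|=1} g_x(\alpha)$ with $g_x(\alpha)=\min_{P\in\P} x^T(A(\alpha)^TP+PA(\alpha))x$, and each $g_x$ is a pointwise \emph{minimum} over $P$ of functions affine in $\alpha$, hence concave in $\alpha$; $g$ is a max of concave functions and need not be convex, so the ``maximum attained at a vertex of $\A$'' step is unsupported. Second, even granting convexity, maximizing a convex function over a polytope is not solved by ascent: a projected-gradient fixed point certifies only local (first-order) optimality, and local maxima at suboptimal vertices are exactly the failure mode you yourself name, so ``standard first-order conditions give a global maximizer'' is circular. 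Third, the Lemma~\ref{lem:sign} workaround still rests on the unproved assertion that the iteration started at $\mathbf{1}_{N\times N}$ ``should have reached'' any $\alpha\in\A$ with $g(\alpha)\ge 0$. The net effect is that your proof establishes exactly what the paper's establishes --- monotone convergence of $\gamma_k$ to some $\gamma^*\le\gamma^*_{LP}$ --- and the identification $\gamma^*=\gamma^*_{LP}$ (equivalently, global optimality of Algorithm~\ref{algo:pd}) remains unjustified in both; it would need a separate argument, e.g.\ additional structure on $g$ or a multi-start/enumeration scheme.
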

\begin{proof}
Given $\alpha_{k-1}$, $P_k$ is the optima of $\mathbf{LyaD}$ for $\alpha=\alpha_{k-1}$ s.t. $P_k=P^*(\alpha_{k-1})$, where
\begin{align*}
P^*(\alpha)=\argmin_{P\in\P}\lambda_{\max}(A(\alpha)^TP+PA(\alpha)).
\end{align*}
Now given $\alpha_{k-1}$ and $P_k$, $x_k$ is the eigenvector associated with the largest eigenvalue of $A(\alpha_{k-1})^TP_k+P_kA(\alpha_{k-1})$.
\begin{align*}
x_k=\argmax_{\|x\|=1}x^T(A(\alpha_{k-1})^TP_k+P_kA(\alpha_{k-1}))x.
\end{align*}
To evaluate the subgradient of $g$, we define a collection of functions $g_x:\A\to\R$ parameterized by $x\in\{z:\|z\|=1\}$.
\begin{align*}
g_x(\alpha;P^*(\alpha))\triangleq x^T(A(\alpha)^TP^*(\alpha)+P^*(\alpha)A(\alpha))x.
\end{align*}
Observe that $g(\cdot)$ is the pointwise maximum of $g_x(\cdot;\cdot)$ where the second variable is determined by the first variable and is uniform in $x$. By the weak rule for pointwise supremum, a subgradient of $g$ at $\alpha$ is any element in $\partial g_{x^*}(\alpha)$ where $x^*$ is such that $g(\alpha)=g_{x^*}(\alpha)$. For $\alpha=\alpha_{k-1}$, we have $g(\alpha_{k-1})=g_{x_k}(\alpha_{k-1};P_k)$ and thus
\begin{align*}
\partial g(\alpha_{k-1}) \ni \partial g_{x_k}(\alpha_{k-1};P_k).
\end{align*}
Due to $g_x(\cdot;\cdot)$ is affine in the first variable, $\partial g_x=\nabla g_x$. Let $\eta=\nabla g_{x_k}(\alpha_{k-1};P_k)\in\R^{N\times N}$. Then, $\eta\in\partial g(\alpha_{k-1})$. By the property of subgradient, for $s>0$ small enough,
\begin{align*}
g(\alpha_k)=g(\alpha_{k-1}+s\eta)\ge g(\alpha_{k-1})+s\langle\eta, \Pi_{\mathcal{T}_{\A}(\alpha_{k-1})}(\eta)\rangle,
\end{align*}
where $\mathcal{T}_{\A}(\alpha)$ denotes the tangent cone of $\A$ at $\alpha$ and $\Pi_{\mathcal{M}}(\cdot)$ denotes the projection operator onto $\mathcal{M}$. For $\alpha\in\interior(\A)$, $\Pi_{\mathcal{T}_{\A}(\alpha)}(\eta)=\eta,\forall \eta\in\R^n$. For $\alpha\in\partial(\A)$, $0\le \langle\eta, \Pi_{\mathcal{T}_{\A}(\alpha_{k-1})}(\eta)\rangle<\|\eta\|^2$. Thus,
\begin{align*}
\gamma_k=g(\alpha_k)\ge g(\alpha_{k-1})=\gamma_{k-1},\forall k\in\N.
\end{align*}
Now that the sequence $\{\gamma_k\}_{k\in\N}$ is increasing and upper bounded by 0, the rest of the proof follows from Theorem~\ref{thm:LyaPsuff}.
\end{proof}

\section{Simulation Results}
\label{sec:sim}
In this section, we illustrate the proposed resilience framework on the IEEE 39-bus system~\cite{rogers2012power}. To obtain the linearized model of the form~(\ref{eq:cl}), an object-oriented version of PST has been used~\cite{chow1992toolbox}. There are $N=10$ buses in the network-reduced model where bus 1 represents subtransient salient pole with $n_1=7$ states, bus 2-9 represent subtransient round rotor with $n_i=8$ states for $i=2,\cdots,9$ and bus 10 represents subtransient round rotor with $n_{10}=4$ states. Each bus from 1 to 9 has a scalar wide-area control input, i.e. $m_i=1, i=1,\cdots,9$ and bus 10 has no control, i.e. $m_{10}=0$. The overall system has $n=75$ states and $m=9$ control inputs. The dimension of system matrices are summarized are follows: $A\in\R^{75\times 75},B\in\R^{75\times 9},K\in\R^{9\times 75}$.

We consider two wide-area controllers $K_1,K_2\in\R^{9\times 75}$ that are relatively centralized as compared with the sparse-promoting controller $K_{sp}$ given in~\cite{dorfler2014sparsity}. The spectral abscissas (maximal real part of eigenvalues) of closed-loop system under the three controllers are summarized in Table~\ref{tbl:eig}. We can see that $K_1,K_2$ have better closed loop performance than $K_{sp}$ since the former two leverage more remote state information than the latter. However, the better closed-loop performance comes at the price of exposing vulnerabilities to cyber attacks. Next, we will analyze the resilience of $K_1,K_2$ under attacks on the communication channels using the proposed framework. 

We first give an overview on the resilience of the two controllers. In particular, we enumerate all possible single- and double-channel attack strategies and summarize the worst attack strategy of each scenario in Table~\ref{tbl:atk}. We can see that $K_1$ is resilient to all the 81 single-channel attack strategies, among which the worst attack 10$\to$2 still results in negative spectral abscissa -0.1744. On the other hand, $K_2$ is not resilient to single-channel attack and there are 2 out of 81 single-channel attack strategies that can destabilize the system. Furthermore, neither $K_1$ nor $K_2$ is resilient to double-channel attack. But $K_1$ is relatively more resilient than $K_2$ as $K_1$ has much less destabilizing double-channel attack strategies (total of 4) than $K_2$ (total of 167). Overall, $K_1$ is more resilient than $K_2$. In what follows, we quantify and analyze the resilience under cyber attacks of the two controllers by first computing their resilience indices and then identifying critical channels based on the machinery we developed in this paper.
%the former has much less number of destabilizing attacks than the latter
%Next, we carry out the proposed quantification methods to analyze the two controllers $K_1,K_2$. In particular, we will first compute resilience index and then apply Algorithm~\ref{algo:pd} to identify critical channels.

\begin{table}[!ht]
\caption{spectral abscissa of closed-loop system}
\centering
\begin{tabular}{@{}lllll@{}}\toprule
& w/o feedback & w/ $K_1$ & w/ $K_2$ & w/ $K_{sp}$\\ \midrule
$\max_i\Real(\lambda_i)$ & -4.9523e-06 & -0.19184 & -0.19195 & -5.8433e-02 \\ 
\bottomrule
\end{tabular}
\label{tbl:eig}
\end{table}

\begin{table}[!ht]
\caption{single- and double-channel attack}
\resizebox{1\linewidth}{!}{
\begin{tabular}{@{}ccccccccc@{}}\toprule
& \multicolumn{2}{c}{total \# of destab.} &   & \multicolumn{2}{c}{worst attack} &   & \multicolumn{2}{c}{max spec. abs.} \\
\cmidrule{2-3} \cmidrule{5-6} \cmidrule{8-9} 
		& 1-ch	& 2-ch 		& & 1-ch 		& 2-ch & & 1-ch & 2-ch \\\midrule
$K_1$ 	& 0/81	& 4/3240	& & $10\to 2$	& {\small $\begin{array}{c}5\to4\\6\to4\end{array}$} & & -0.1744 & 0.1268 \\
$K_2$ 	& 2/81	& 167/3240	& & $5\to 4$	& {\small $\begin{array}{c}4\to1\\5\to4\end{array}$} & & 0.1484 & 0.6332 \\
\bottomrule
\end{tabular}
}
\label{tbl:atk}
\end{table}

%\begin{figure*}[!ht]
%	\centering
%	\begin{tabular}{cc}
%		\includegraphics[width=0.4\textwidth]{} &
%		\includegraphics[width=0.4\textwidth]{} \\
%		(a) & (b)  
%	\end{tabular}
%	\caption{resilience index under single-channel attack for (a): $K_1$ and (b): $K_2$}
%	\label{fig:index}
%\end{figure*}

\begin{figure}[!ht]
	\centering
	\includegraphics[width=1\linewidth]{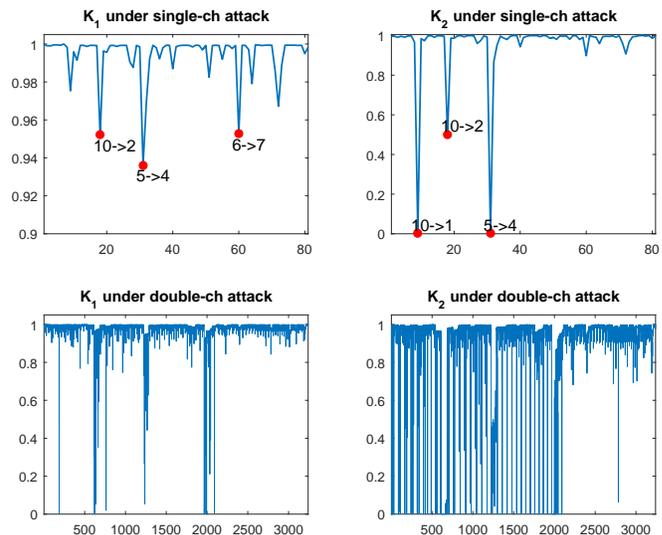}
	\caption{resilience index under single- and double-channel attack}
	\label{fig:index}
\end{figure}
\begin{figure}[!ht]
	\centering
	\includegraphics[width=1\linewidth]{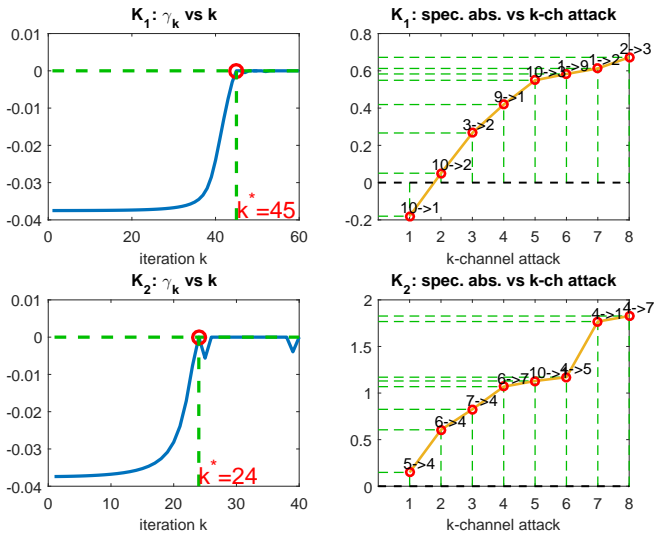}
	\caption{left: convergence of Algorithm~\ref{algo:pd}, right: spectral abscissa under $k$-channel attack}
	\label{fig:algo}
\end{figure}

%\begin{figure*}[!ht]
%	\centering
%	\begin{tabular}{cc}
%		\includegraphics[width=0.35\textwidth]{} &
%		\includegraphics[width=0.35\textwidth]{} \\
%		(a) & (b)  \\
%		\includegraphics[width=0.35\textwidth]{} &
%		\includegraphics[width=0.35\textwidth]{} \\
%		(c) & (d) 
%	\end{tabular}
%	\caption{(a)/(c): convergence of Algorithm~\ref{algo:pd} under $K_1/K_2$, (b)/(d): critical channel identification of $K_1/K_2$}
%	\label{fig:cvgce}
%\end{figure*}

\subsection{Resilience Index}
We compute resilience indices of the two controllers under single- and double-channel attack using the definition given in~(\ref{eq:index}) and present them in Fig.~\ref{fig:index}. The worst three single-channel attack strategies, corresponding to the smallest three resilience indices, are highlighted by red dots. We can see that resilience index of $K_1$ is larger than that of $K_2$, suggesting $K_1$ is more resilient than $K_2$, as what is expected. This shows that our resilience index is an effective metric to quantify resilience.
%$r_{K_1},r_{K_2}$ for single-channel attack strategies and plot them in Fig.~\ref{fig:index}. We can see that $K_2$ is not resilient and two attack strategies can destabilize the system. On the other hand, $K_1$ is resilient to single-channel attack. The three most destabilizing single-channel attack strategies are highlighted by red dots for both the case of $K_1$ and $K_2$.

\subsection{Identification of Critical Channels}
We apply Algorithm~\ref{algo:pd} to check the resilience criterion for the two controllers. The sequences of optimal value are plotted in the left panel of Fig.~\ref{fig:algo}. We can see that $\gamma_k\uparrow 0$ in both cases. By Theorem~\ref{thm:cvgce}, we know that $K_1$ and $K_2$ both satisfy the necessary condition for non-resilience. To identify critical channels, we focus on the optimal relaxed strategy $\alpha^*$ obtained at the instance $k^*$ when the optimal value firstly reaches 0. We rank the criticality of channels by the magnitude of their corresponding entry of $\alpha^*$, that is the smaller $\alpha^*_{ij}$ is, the more critical channel $j\to i$ is. We consider $k$-channel attacks for $k=1,\cdots,8$ generated by the criticality ranking and plot the resulting spectral abscissa on the right panel of Fig.~\ref{fig:algo}. The $k$-th most critical channel is labeled on top of the red circle corresponding to $k$-channel attack, whose attack set includes the first $k$ most critical channels. We can see that the system is driven more and more unstable under the sequence of critical $k$-channel attack strategy. Therefore, we successfully identify structural vulnerabilities by the criticality ranking.
%as the attack set grows bigger, i.e. $k$ increases. 

%We apply Algorithm~\ref{algo:pd} to identify critical channels of $K_1,K_2$. By the time the algorithm converges, we obtain the most destabilizing relaxed attack strategy $\alpha^*\in\A$ where $\alpha^*_{ij}\in[0,1],\forall i\neq j$. The critical channels correspond to the entries of $\alpha^*$ with large magnitude. In other words, the $K$-largest entries of $\alpha^*$ is considered to be a candidate of destabilizing $K$-channel attack strategy.
%
%The convergence of $\gamma_k$ is shown in Fig.~\ref{fig:cvgce}(a1)/\ref{fig:cvgce}(b1) for $K_1$/$K_2$ where the red circle indicates the iteration at which the algorithm converges. The identification procedure is shown in Fig.~\ref{fig:cvgce}(a2)/Fig.~\ref{fig:cvgce}(b2) for $K_1$/$K_2$ where red circles represent the most destabilizing $K$-channel attack and their corresponding spectral abscissa. The channel labeled on the $K$-th red circle is the $K$-th most critical channel. We can see that the algorithm successfully identifies a sequence of attack strategies that destabilize the system and the attacked channels inherit structural vulnerabilities.

\section{Conclusion}
This paper proposes a novel framework for resilience analysis and quantification of wide-area control of power systems. We formally define the notion of resilience in the presence of cyber attacks. Resilience conditions are given in terms of Lyapunov-based optimization problems. A resilience index is defined to quantify the degree of resilience. We develop an efficient numerical algorithm to check the proposed resilience criterion as well as identify structural vulnerabilities.

\bibliographystyle{IEEEtran}
\bibliography{resilientControl}
\end{document}